\documentclass[11pt]{article}

\textwidth 15.5cm
\textheight 21.5cm
\topmargin 0cm
\evensidemargin 0in
\oddsidemargin 0in

\usepackage[utf8]{inputenc}
\usepackage{amsmath,amsthm,amssymb,enumerate,framed,mdwlist}
\usepackage{color,colortab}
\usepackage[square,numbers,sort&compress]{natbib}

\newcommand{\Q}{\mathbb Q}

\newcommand{\Z}{\mathbb{Z}}
\newcommand{\R}{\mathbb{R}}

\newcommand{\aff}{\operatorname{aff}}

\newcommand{\floor}[1]{\left\lfloor#1\right\rfloor}

\renewcommand{\epsilon}{\varepsilon}

\newtheoremstyle{mythmstyle}
	{\topsep}
	{\topsep}
	{\itshape}
	{}
	{\scshape}
	{.}
	{3pt}
	{}
\theoremstyle{mythmstyle}

\newtheorem{nn}{}[section]

\newtheorem{theorem}[nn]{Theorem}

\newtheorem{prop}[nn]{Proposition}

\newtheorem{claim}{Claim}

\newtheorem{REMARK}[nn]{Remark}
\newenvironment{remark}{\begin{REMARK}}{\end{REMARK}}

\newenvironment{cpf}{\begin{trivlist} \item[] {\em Proof of Claim.}}{\hspace*{\stretch{1}} $\diamond$ \end{trivlist}}

%

\numberwithin{equation}{section}

\allowdisplaybreaks

\begin{document}

\title{An extreme function which is nonnegative and discontinuous everywhere}

\author{Amitabh Basu\footnote{Department of Applied Mathematics and Statistics, The Johns Hopkins University. Supported by the NSF grant CMMI1452820.} \and Michele Conforti\footnote{Dipartimento di Matematica ``Tullio Levi-Civita'', Universit\`a degli Studi di Padova, Italy. Supported by the grants ``SID 2016'' and ``PRIN 2016''.}\and Marco Di Summa\footnotemark[2]}


\maketitle

\begin{abstract}
We consider Gomory and Johnson's infinite group model with a single row.  Valid inequalities for this model are expressed by valid functions and it has been recently shown that any valid function is dominated by some nonnegative valid function, modulo the affine hull of the model.
Within the set of nonnegative valid functions, extreme functions are the ones that cannot be expressed as convex combinations of two distinct valid functions. In this paper we construct an
extreme function $\pi:\R \to [0,1]$ whose graph is dense in $\R \times [0,1]$. Therefore $\pi$ is discontinuous everywhere.
\end{abstract}

\section{Introduction}
Given $b\in \R\setminus \Z$, Gomory's single-row infinite group model is defined as the set $I_b$ of {\em finite support} functions $y: \R \to \Z_+$ (i.e., $y$ takes value $0$ on all but a finite subset of $\R$) such that
$$\sum_{x \in \R} y(x)x \equiv b.$$
We use the symbol ``$\equiv$'' for the congruence modulo 1. This model arises as a relaxation of a single-row integer program in tableau form. We will use $\R^{(\R)}$ to denote the set of finite support functions from $\R$ to $\R$, and $\R^{(\R)}_+$ will denote the subset of these functions that are nonnegative.

Note that, as $b\not\in \Z$, the function $y_0$ that takes value 0 on all  of $\R$ is not in $I_b$.
We study halfspaces in the space of finite support functions $y: \R \to \R$ that contain $I_b$ but not $y_0$. Any such halfspace can be described by an inequality of the form
$$\sum_{x\in \R}\pi(x)y(x) \geq 1,$$
where the function $\pi:\R\to \R$ (which is not necessarily finite support) gives the coefficients of the inequality. We use the notation $H_\pi$ to denote this halfspace, and call $\pi$ a {\em valid function} whenever $I_b\subseteq H_\pi$.

Valid functions were introduced by Gomory and Johnson in~\cite{infinite,infinite2} and they have been the subject of intensive research, which is summarized in the recent surveys~\cite{basu2016light,basu2016light2,basu2015geometric}. Valid functions have been mostly studied under the nonnegativity assumption, i.e., $\pi\geq 0$. Recently Basu et al.~\cite{basu2017structure} have shown that this assumption is legitimate, as any valid function has an equivalent nonnegative form, see Theorem \ref{th:nonneg} below. The same theorem shows that valid functions that take negative values exist and most of them have graphs which are dense in the plane, which makes them intractable. We will elaborate this point in the next section. Up to now, it was believed that such a pathological behavior could not happen when the function is nonnegative. In this paper we construct an extreme function $\pi:\R \to [0,1]$ whose graph is dense in $\R \times [0,1]$. 
In the remainder we will assume that a valid function is nonnegative, unless explicitly stated.
\smallskip

A valid function $\pi:\R\to\R_+$  {\em dominates} the  valid function $\tilde\pi:\R\to\R_+$ if $\pi\ne\tilde\pi$ and $\pi\le\tilde\pi$ componentwise. Since $I_{b}\subseteq \R^{(\R)}_+$ and $\R^{(\R)}_+\cap H_{\pi}\subsetneq  \R^{(\R)}_+\cap H_{\tilde\pi}$ whenever $\pi$  dominates $\tilde\pi$, we can say that $\pi$  is a ``better'' function than $\tilde\pi$.

 A valid function $\pi:\R\to\R_+$ is {\em minimal} if it is not dominated by another valid function. An application of Zorn's lemma shows that every valid function that is not minimal is dominated by a minimal function, see e.g.~\cite[Theorem1]{basu-hildebrand-koeppe-molinaro:k+1-slope}.

Gomory and Johnson~\cite{infinite,infinite2} proved the following characterization of minimal functions.

\begin{theorem}\label{th:minimal}
A function $\pi:\R\to\R_+$ is  minimal if and only if:
\begin{itemize}
\item $\pi(x)+\pi(y)\ge\pi(x+y)$ for every $x,y\in \R$ (subadditivity);
\item $\pi(x) + \pi(b- x) = 1$ for every $x\in \R$  (symmetry);
\item $\pi(z) = 0$ for every $z\in \Z$.
\end{itemize}
\end{theorem}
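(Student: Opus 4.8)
The plan is to prove both implications by the standard perturbation technique, in which a hypothetical violation of one of the three conditions is converted into a valid function that pointwise dominates $\pi$, contradicting minimality.

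For the ``if'' direction, assume $\pi$ is subadditive, symmetric, and vanishes on $\Z$. First I would record that these hypotheses force $\pi$ to be $\Z$-periodic: subadditivity together with $\pi(\pm 1)=0$ gives both $\pi(x+1)\le\pi(x)$ and $\pi(x)\le\pi(x+1)$, so $\pi(x+1)=\pi(x)$. Evaluating symmetry at $x=0$ and using $\pi(0)=0$ yields $\pi(b)=1$. Validity is then immediate: for $y\in I_b$, iterating subadditivity gives $\sum_x\pi(x)y(x)\ge\pi\bigl(\sum_x x\,y(x)\bigr)$, and since $\sum_x x\,y(x)\equiv b$, periodicity turns the right-hand side into $\pi(b)=1$. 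For minimality, suppose a valid $\pi'$ satisfies $\pi'\le\pi$. Testing validity of $\pi'$ on the element $y$ supported on $\{x,\,b-x\}$ (or on $2\cdot\chf_{\{x\}}$ when $2x\equiv b$) gives $\pi'(x)+\pi'(b-x)\ge 1$; combined with $\pi'\le\pi$ and symmetry $\pi(x)+\pi(b-x)=1$, this forces $\pi'(x)=\pi(x)$ for every $x$, so $\pi'=\pi$.

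For the ``only if'' direction, assume $\pi$ is minimal. The integrality and subadditivity conditions come from the same substitution idea. If $\pi(z)>0$ for some $z\in\Z$, then deleting the copies of $z$ from the support of any $y\in I_b$ (which changes $\sum_x x\,y(x)$ by the integer $-z\cdot y(z)$, hence not modulo $1$) shows that the function obtained from $\pi$ by resetting its value at $z$ to $0$ is still valid, contradicting minimality; hence $\pi$ vanishes on $\Z$, and as above $\pi$ is now $\Z$-periodic. Likewise, if $\pi(u)+\pi(v)<\pi(u+v)$, then replacing each occurrence of $u+v$ in the support of $y$ by one copy each of $u$ and $v$ preserves membership in $I_b$ and shows that lowering $\pi(u+v)$ to $\pi(u)+\pi(v)$ keeps the function valid, again contradicting minimality; hence $\pi$ is subadditive. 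The same bracket also shows $\pi\le 1$ (any value exceeding $1$ can be lowered to $1$ without losing validity, since $\pi\ge 0$ and every relevant $y$ uses the point at least once), and in particular $\pi(b)=1$.

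The subtle point, which I expect to be the crux, is symmetry. From validity, testing on the element supported on $\{x,\,b-x\}$ already gives $\pi(x)+\pi(b-x)\ge 1$ for all $x$, so I only need to rule out a strict inequality $\pi(x^*)+\pi(b-x^*)=1+\delta$ with $\delta>0$. The naive remedy --- lower $\pi$ on the coset of $x^*$ by a small $\eps$ --- is exactly where the difficulty lies, because an element $y$ may contain many copies of $x^*$ and each copy loses $\eps$, so for large multiplicity the perturbed sum could drop below $1$. The observation that rescues the argument is that subadditivity produces a surplus that does \emph{not} degrade with the multiplicity: if $y(x^*)=k\ge 1$, then collapsing the remaining support via subadditivity and periodicity gives $\sum_x\pi(x)y(x)\ge k\,\pi(x^*)+\pi(b-kx^*)$, and the estimate $\pi(b-x^*)\le\pi(b-kx^*)+(k-1)\pi(x^*)$ turns this into $\sum_x\pi(x)y(x)\ge\pi(x^*)+\pi(b-x^*)=1+\delta$, uniformly in $k$. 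Since the violation forces $\pi(x^*)>0$ (otherwise $\pi(b-x^*)>1$, contradicting $\pi\le 1$), one can choose $\eps>0$ small enough that the loss $k\eps$ is beaten by the uniform surplus $\delta$ for small $k$ and by the growing term $k\,\pi(x^*)$ for large $k$; the perturbed function is then valid, nonnegative, and strictly below $\pi$ at $x^*$, contradicting minimality. This establishes $\pi(x)+\pi(b-x)=1$ and completes the proof.
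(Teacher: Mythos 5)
The paper offers no proof of Theorem \ref{th:minimal} to compare against: it states the result as Gomory and Johnson's characterization and defers to \cite{infinite,infinite2}. Judged on its own merits, your proof is correct, and it is essentially the classical perturbation argument from those original papers.

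The individual steps all check out. In the ``if'' direction: periodicity from subadditivity plus $\pi(\pm1)=0$, validity by collapsing any $y\in I_b$ via iterated subadditivity to $\pi(b)=1$, and minimality from testing a candidate $\pi'\le\pi$ on the two-point solutions, which combined with symmetry forces $\pi'=\pi$. In the ``only if'' direction, the one-point substitution arguments for integrality, subadditivity and the bound $\pi\le1$ are all sound (note $y'\ne y_0$ is automatic since $b\notin\Z$). You correctly identified the symmetry condition as the crux, and your surplus estimate closes it: writing $b-x^*=(b-kx^*)+(k-1)x^*$ gives $\pi(b-x^*)\le\pi(b-kx^*)+(k-1)\pi(x^*)$, whence $\sum_x\pi(x)y(x)\ge k\pi(x^*)+\pi(b-kx^*)\ge\pi(x^*)+\pi(b-x^*)=1+\delta$ uniformly in $k=y(x^*)\ge1$; since $\pi\le1$ forces $\pi(x^*)\ge\delta$, one has the second bound $\sum_x\pi(x)y(x)\ge k\delta$, and a choice such as $\eps\le\min(\delta/2,\,\delta^2/2)$ makes the loss $k\eps$ absorbable in both regimes while keeping $\pi'\ge0$, contradicting minimality. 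Two points of bookkeeping, neither a genuine gap: first, in the ``only if'' part you invoke $\Z$-periodicity ``as above'' before subadditivity has been established there --- just reorder (integrality, then subadditivity, then periodicity), which is harmless since your substitution proof of subadditivity uses only validity; second, the degenerate case of the two-point test is exact equality $x=b-x$ as reals (i.e.\ $x=b/2$), not the congruence $2x\equiv b$ --- when $2x\equiv b$ but $x\ne b-x$ the two points are distinct and the ordinary test applies, and in the true collision case $y=2\chf_{\{x\}}$ works as you said. Also note that in your surplus bound the case where $y$ is supported only on $x^*$ needs the convention $\pi(b-kx^*)=0$ when $b-kx^*\in\Z$, which holds since $\pi$ vanishes on $\Z$.
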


The above theorem implies $\pi(b)=1$ and $\pi(x)=\pi(x+z)$ for every $x\in \R$ and $z\in \Z$  (periodicity). Therefore it suffices to define a minimal function on the interval $[0,1)$.\smallskip

A valid function $\pi:\R\to\R_+$ is \emph{extreme} if $\pi_1=\pi_2$ for every pair of valid functions $\pi_1,\pi_2:\R \to\R_+$ such that $\pi=\frac{1}{2}\pi_1+\frac{1}{2}\pi_2$. It is well known that every extreme  function is minimal. 

\paragraph{``Complicated'' extreme functions.}

The most well known extreme function is the (Gomory mixed-integer) GMI function: if we assume $0<b<1$, the GMI function is
$$\pi(x)=\begin{cases}
\frac{x}{b}&\mbox{if $0\le x \le b$}\\
\frac{1-x}{1-b}&\mbox{if $b\le x<1$}.
\end{cases}$$

The GMI function is piecewise linear and has two slopes.
\smallskip

In 2003, Gomory and Johnson \cite{tspace} conjectured that extreme functions that are continuous  are always  piecewise linear functions.  Basu et al. \cite{bccz08222222} constructed a sequence of extreme functions that are piecewise linear, have two slopes,  but an increasing number of breakpoints. The limiting function of such a sequence is an extreme function that is not piecewise linear. This is a counterexample to the above conjecture.

Gomory and Johnson constructed an extreme function that is piecewise linear with 3 slopes. It appears to be hard to construct extreme functions that are piecewise linear with many slopes. Indeed, all known families of piecewise linear extreme functions had at most 4 slopes until 2013 when  Hildebrand, in an unpublished result, constructed an extreme function that is piecewise linear with 5 slopes. More recently K\"oppe and Zhou~\cite{koeppe-zhou:extreme-search} constructed an extreme function that is piecewise linear with 28 slopes and Basu et al.~\cite{basu2016extreme} constructed a sequence of extreme functions that are piecewise linear, with strictly increasing number of slopes. The limiting function of such a sequence is an extreme function with an infinite number of slopes.
\smallskip

 There are several constructions of nonnegative functions that are extreme and discontinuous. However, to the best of our knowledge, these functions are piecewise linear with a finite number of discontinuities all of which are jump discontinuities, with  a  notable exception: K\"oppe and Zhou   \cite[p.~338]{koeppe-zhou-ipco2017}   constructed a function which is nonnegative, extreme and discontinuous at all points in two subintervals of the $[0,1]$ interval.
In this paper, we construct a function that is nonnegative, extreme and whose graph is dense in $\R \times [0, 1]$. So it is discontinuous everywhere and the left and right limits do not exist at any point. The function in~\cite{koeppe-zhou-ipco2017} also has no left and right limits at the points of discontinuity. The graph of this function is not dense in $\R \times [0, 1]$; in fact, the closure of this graph  is the union of finitely many piecewise linear curves in $\R^2$.
We refer to~\cite{koppe2016equivariant}  for a discussion of discontinuities in extreme functions.

\section{The affine hull of $I_b$, Hamel bases and the nonnegative form of a valid function}

We summarize here some results that appear in~\cite{basu2017structure} and are useful for our construction.

A function $\theta:\R\to\R$ is {\em additive} if it satisfies the following \emph{Cauchy functional equation}:
\begin{equation}\label{eq:additive}
\theta(u+v)=\theta(u)+\theta(v)\mbox{ for all }u,v\in\R.
\end{equation}

Equation \eqref{eq:additive} has been extensively studied, see e.g.~\cite{aczel1989functional}.
Given any $c\in\R$, the linear function $\theta(x)=cx$ is obviously a solution to \eqref{eq:additive}. However, there are other solutions that we describe below.

A {\em Hamel basis} for $\R$ is a basis of  $\R$ over the field $\Q$. In other words, a Hamel basis is a subset $\mathcal B\subset\R$ such that, for every $x\in\R$, there exists a {\em unique} choice of a  finite subset $\{ a_1,\dots, a_t\}\subseteq \mathcal B$  and nonzero {\em rational} numbers $\lambda_1,\dots,\lambda_t$ such that
\begin{equation}\label{eq:combination}
x=\sum_{i=1}^t\lambda_i a_i.
\end{equation}
The existence of a Hamel basis $\mathcal B$ is guaranteed under the axiom of choice.

For every $ a\in \mathcal B$, let $c( a)$ be a real number. Define $\theta$ as follows: for every $x\in\R^n$, if \eqref{eq:combination} is the unique decomposition of $x$, set
\begin{equation}\label{eq:theta}
\theta(x)=\sum_{i=1}^t \lambda_ic( a_i).
\end{equation}
It is easy to check that a function of this type is additive and the following theorem proves that all additive functions are of this form~\cite[Theorem 10]{aczel1989functional}.

\begin{theorem}\label{Th-Hamel}
Let $\mathcal B$ be a Hamel basis of $\R^n$. Then every additive function is of the form \eqref{eq:theta} for some choice of real numbers $c( a),\, a\in \mathcal B$.
Furthermore the graph of an additive function that is not a linear function is dense in $\R\times \R$.
\end{theorem}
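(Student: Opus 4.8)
The plan is to treat the two assertions separately: the Hamel-basis representation of additive functions, and the density of the graph in the non-linear case. (I give the argument for $\theta\colon\R\to\R$, which is the case relevant to the paper and the one for which ``dense in $\R\times\R$'' is meaningful; the $\R^n$ case is entirely analogous.) The easy converse --- that every function of the form \eqref{eq:theta} is additive --- is already noted before the statement, so the content of the first part is that \emph{every} additive $\theta$ arises this way.

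For the representation, I would first record the standard $\Q$-homogeneity of any additive function. From \eqref{eq:additive} one gets $\theta(0)=0$; then $\theta(nx)=n\theta(x)$ for $n\in\N$ by induction, $\theta(x/n)=\theta(x)/n$ by applying this to $x/n$, and $\theta(-x)=-\theta(x)$, which together give $\theta(qx)=q\,\theta(x)$ for every $q\in\Q$ and $x\in\R$. Given the Hamel basis $\mathcal B$, I would then simply set $c(a):=\theta(a)$ for each $a\in\mathcal B$. For an arbitrary $x\in\R$ with unique decomposition \eqref{eq:combination}, additivity over the finite sum together with $\Q$-homogeneity yields $\theta(x)=\sum_{i=1}^t\theta(\lambda_i a_i)=\sum_{i=1}^t\lambda_i\theta(a_i)=\sum_{i=1}^t\lambda_i c(a_i)$, which is exactly \eqref{eq:theta}.

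For the density statement, suppose $\theta$ is additive but not of the form $\theta(x)=cx$. The first step is to translate non-linearity into a statement about the basis: if $\theta(a)=c\,a$ held for a single constant $c$ and all $a\in\mathcal B$, then $\Q$-linearity would force $\theta(x)=cx$ for all $x$; hence there exist two basis elements $a_1,a_2\in\mathcal B$ with $\theta(a_1)/a_1\neq\theta(a_2)/a_2$. Equivalently, the vectors $\v_1=(a_1,\theta(a_1))$ and $\v_2=(a_2,\theta(a_2))$ are linearly independent over $\R$, so $\{\v_1,\v_2\}$ is a basis of $\R^2$.

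The final step exploits that the graph $G=\{(x,\theta(x)):x\in\R\}$ is a $\Q$-linear subspace of $\R^2$, since $\theta$ is additive and $\Q$-homogeneous. In particular $G$ contains every rational combination $\lambda_1\v_1+\lambda_2\v_2$ with $\lambda_1,\lambda_2\in\Q$, i.e.\ the image $L(\Q^2)$ of $\Q^2$ under the invertible $\R$-linear map $L\colon(\lambda_1,\lambda_2)\mapsto\lambda_1\v_1+\lambda_2\v_2$. Because $\Q^2$ is dense in $\R^2$ and $L$ is a linear homeomorphism, $L(\Q^2)$ is dense in $\R^2$, and hence $G\supseteq L(\Q^2)$ is dense. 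I expect the only genuine subtlety to be the passage from ``not linear'' to the existence of two basis elements with distinct slopes, producing the $\R$-independent pair $\v_1,\v_2$; once this is established, the density of $L(\Q^2)$ follows routinely from the density of $\Q^2$ and the continuity of $L$ and $L^{-1}$.
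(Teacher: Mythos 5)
Your proof is correct, but there is nothing in the paper to compare it against: the paper does not prove Theorem~\ref{Th-Hamel} at all, quoting it instead from Acz\'el--Dhombres \cite[Theorem 10]{aczel1989functional}. What you have written is essentially the classical argument from that literature, and every step checks out. For the representation, deriving $\Q$-homogeneity from \eqref{eq:additive} ($\theta(0)=0$, then $\theta(nx)=n\theta(x)$, $\theta(x/n)=\theta(x)/n$, $\theta(-x)=-\theta(x)$) and setting $c(a):=\theta(a)$ is exactly right, and uniqueness of the decomposition \eqref{eq:combination} makes \eqref{eq:theta} well defined. For density, the one step you flagged as the genuine subtlety is handled correctly: if $\theta(a)/a$ were the same constant $c$ for all $a\in\mathcal B$ (the ratios are well defined since $0\notin\mathcal B$), $\Q$-linearity would give $\theta(x)=cx$ everywhere, so non-linearity produces $a_1,a_2$ with distinct ratios; the determinant $a_1\theta(a_2)-a_2\theta(a_1)\neq0$ then makes $\v_1,\v_2$ an $\R$-basis of $\R^2$, and since the graph is a $\Q$-linear subspace containing $\v_1,\v_2$, it contains $L(\Q^2)$, which is dense because $L$ is a linear homeomorphism. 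Two minor remarks: the theorem as stated in the paper mixes $\R^n$ (for the basis) with density in $\R\times\R$, and your restriction to $n=1$ is the sensible reading, since that is the only case the paper uses; also, your argument implicitly uses that $\mathcal B$ has at least two elements, which is automatic here ($\R$ is infinite-dimensional over $\Q$) and in any case guaranteed by your contrapositive. In short, you have supplied a complete, self-contained proof of a result the paper only cites.
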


The following result is an immediate extension of a result of Basu, Hildebrand and K\"oppe (see \cite[Propositions 2.2--2.3]{basu2016light}) and appears in~\cite{basu2017structure}.
We define the affine hull of $I_b$ as the smallest affine subspace of $\R^{(\R)}$ that contains $I_b$.

\begin{prop}\label{prop:affine-hull}
The affine hull of $I_b$  is described by the equations
\begin{equation}\label{eq:aff-Ib}
\sum_{p\in\R^n}\theta(p)y(p)=\theta(b)
\end{equation}
for all additive functions $\theta:\R\to\R$ such that $\theta(p)=0$ for every $p\in\Q^n$.
\end{prop}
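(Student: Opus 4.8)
The plan is to prove the two inclusions between $\aff(I_b)$ and the solution set $S$ of the stated system. For an additive $\theta$ write $\Lambda_\theta(y)=\sum_{p}\theta(p)y(p)$, a linear functional on $\R^{(\R)}$, and let $\Theta_0$ be the set of additive functions vanishing on $\Q$, so that $S=\setcond{y}{\Lambda_\theta(y)=\theta(b)\text{ for all }\theta\in\Theta_0}$. The inclusion $\aff(I_b)\subseteq S$ is the easy half: for $y\in I_b$ the real number $\sum_p y(p)p$ equals $b+m$ for some $m\in\Z$, and since an additive $\theta$ is $\Z$-linear we get $\Lambda_\theta(y)=\theta\big(\sum_p y(p)p\big)=\theta(b)+\theta(m)=\theta(b)$, because $\theta$ kills $\Q\supseteq\Z$. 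As $S$ is an affine subspace containing $I_b$, it contains $\aff(I_b)$.

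For the reverse inclusion I would compare linear parts. Fix the unit function $\delta_p\in\R^{(\R)}$ (value $1$ at $p$ and $0$ elsewhere); then $\delta_b\in I_b\subseteq S$, and writing $L$ for the linear span of $\setcond{y-y'}{y,y'\in I_b}$ and $S_0=\bigcap_{\theta\in\Theta_0}\ker\Lambda_\theta$, we have $\aff(I_b)=\delta_b+L$ and $S=\delta_b+S_0$. The first inclusion already yields $L\subseteq S_0$, so it remains to show $S_0\subseteq L$. First I would exhibit explicit elements of $L$: for $z\in\Z$ the function $\delta_z$ lies in $L$ (compare any $y_0\in I_b$ with $y_0+\delta_z\in I_b$), and for all $u,v\in\R$ the function $\delta_u+\delta_v-\delta_{u+v}$ lies in $L$ (start from a $y\in I_b$ having positive mass at $u+v$ and split that mass into masses at $u$ and $v$; the intermediate functions stay nonnegative and integer-valued and the congruence is preserved). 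Let $M$ be the subspace they span, so $M\subseteq L$.

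The heart of the argument, and the step I expect to be the main obstacle, is the opposite containment $S_0\subseteq M$, which I would obtain via a Hamel basis $\mathcal B$ of $\R$ over $\Q$ with $1\in\mathcal B$ together with Theorem~\ref{Th-Hamel}. Writing each $p=\sum_{a\in\mathcal B}\pi_a(p)\,a$ with $\pi_a(p)\in\Q$, an additive $\theta$ belongs to $\Theta_0$ exactly when $\theta(1)=0$, while the values $\theta(a)$ for $a\in\mathcal B\setminus\{1\}$ are arbitrary. Hence for finitely supported $d$ we have $\Lambda_\theta(d)=\sum_{a\ne 1}\theta(a)\sum_p d(p)\pi_a(p)$, and requiring this to vanish for all $\theta\in\Theta_0$ is equivalent to $\sum_p d(p)\pi_a(p)=0$ for every $a\in\mathcal B\setminus\{1\}$, a finite coordinate description of $S_0$. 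To conclude $d\in M$, I would pass to the quotient $W=\R^{(\R)}/M$ and note that the defining relations of $M$ make $p\mapsto[\delta_p]$ a group homomorphism $\R\to W$ killing $\Z$; since $W$ is a real vector space this homomorphism is automatically $\Q$-linear and kills $\Q$, so $[\delta_p]=\sum_{a\ne 1}\pi_a(p)[\delta_a]$. Then for $d\in S_0$ we get $[d]=\sum_{a\ne 1}\big(\sum_p d(p)\pi_a(p)\big)[\delta_a]=0$, that is, $d\in M$.

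Combining the three containments gives $L\subseteq S_0\subseteq M\subseteq L$, hence $L=S_0$ and therefore $S=\delta_b+S_0=\delta_b+L=\aff(I_b)$, as desired. The only delicate points are realizing the generators of $M$ as genuine differences of elements of $I_b$ (maintaining nonnegativity and integrality) and checking that $p\mapsto[\delta_p]$ is $\Q$-linear; both become routine once the quotient viewpoint is in place, and the Hamel-basis bookkeeping is precisely what collapses the infinite family of defining equations into the finite coordinate condition used above.
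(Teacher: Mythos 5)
Your proof is correct; note, however, that the paper itself contains no proof of this proposition --- it is stated as an immediate extension of Propositions 2.2--2.3 of Basu, Hildebrand and K\"oppe and quoted from the Basu--Conforti--Di Summa--Paat IPCO 2017 paper --- so there is no internal argument to compare against, and your proposal must be judged as a self-contained proof, which it passes. The easy inclusion $\aff(I_b)\subseteq S$ is handled exactly right (additive functions are $\Q$-linear, so $\Lambda_\theta(y)=\theta(b+m)=\theta(b)$ for $y\in I_b$). For the converse, your reduction to lineality spaces ($\aff(I_b)=\delta_b+L$, $S=\delta_b+S_0$, with $\delta_b\in I_b$ as common base point) is clean, and the three containments $L\subseteq S_0\subseteq M\subseteq L$ all check out: the generators $\delta_z$ ($z\in\Z$) and $\delta_u+\delta_v-\delta_{u+v}$ are genuinely differences within $I_b$ (e.g.\ via $\delta_{u+v}+\delta_{b-u-v}\in I_b$ and its split; the degenerate coincidences such as $u+v=b-u-v$ or $v=0$ merely add function values and cause no trouble); the Hamel-coordinate bookkeeping correctly collapses the infinite system $\Lambda_\theta(d)=0$, $\theta\in\Theta_0$, to the finitely many nontrivial conditions $\sum_p d(p)\pi_a(p)=0$ for $a\in\mathcal B\setminus\{1\}$, where Theorem~\ref{Th-Hamel} is what licenses choosing $\theta(a)$ freely subject to $\theta(1)=0$; and the quotient step is sound because $W=\R^{(\R)}/M$ is a real, hence uniquely divisible, vector space, so the additive map $p\mapsto[\delta_p]$ is automatically $\Q$-linear and vanishes on all of $\Q$ once it vanishes on $\Z$, giving $[\delta_p]=\sum_{a\neq1}\pi_a(p)[\delta_a]$ and thus $[d]=0$ for $d\in S_0$. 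Two cosmetic remarks: the symbols $\R^n$ and $\Q^n$ in the statement are typos inherited from the general $n$-row model and should read $\R$ and $\Q$, which is how you (rightly) interpret them; and your argument nowhere needs $b\in\Q$, consistent with the proposition being stated for arbitrary $b\in\R\setminus\Z$, unlike Theorem~\ref{th:nonneg}.
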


\begin{remark}  Assume $b\in \Q$ and let $\theta:\R\to\R$ be an additive function such that $\theta(p)=0$ for every $p\in\Q^n$. Let  $\pi:\R\to\R_+$ be a valid function which is piecewise linear, such as the GMI function.  The above proposition  shows that the functions $\pi$ and $\pi+\theta$ are equivalent in the sense that
$$\aff(I_b)\cap H_{\pi}=\aff(I_b)\cap H_{\pi+\theta}.$$
By Theorem \ref{Th-Hamel}, the graph of $\pi+\theta$   is dense in $\R\times \R$.
\end{remark}

Basu et al.~\cite{basu2017structure} show the following:

\begin{theorem}\label{th:nonneg}
 Assume $b\in \Q$. For every valid function $\pi$ for $I_b$, there exists an additive function $\theta$ such that  $\theta(p)=0$ for every $p\in\Q^n$ and the valid function  $\pi':=\pi+\theta$ satisfies $\pi'\ge0$.
\end{theorem}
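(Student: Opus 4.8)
The plan is to obtain the correction term as $\theta=-\phi$, where $\phi\colon\R\to\R$ is an additive (equivalently, $\Q$-linear) function satisfying $\phi\le\pi$ pointwise and $\phi(q)=0$ for every $q\in\Q$; such a $\phi$ immediately yields $\pi'=\pi+\theta=\pi-\phi\ge 0$. Note first that any such $\theta$ does produce a \emph{valid} function: since $b\in\Q$ we have $\theta(b)=0$, so by Proposition~\ref{prop:affine-hull} the expression $\sum_{p}\theta(p)y(p)=\theta(b)=0$ vanishes on $\aff(I_b)$, whence $\sum_p\pi'(p)y(p)=\sum_p\pi(p)y(p)\ge 1$ for every $y\in I_b$. (This is the only place where $b\in\Q$ is used.) Thus the problem reduces to constructing $\phi$, which I would do via the Hahn--Banach theorem, viewing $\R$ as a vector space over the field $\Q$.

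First I would record the pointwise consequence of validity that drives everything. Fix any $y_0\in I_b$ (for instance $y_0=\chf_{\{b\}}$). If $w\colon\R\to\Z_+$ has finite support and $\sum_x w(x)x\in\Z$, then $y_0+k\,w\in I_b$ for every $k\in\N$, so validity gives $\sum_x\pi(x)y_0(x)+k\sum_x\pi(x)w(x)\ge 1$; dividing by $k$ and letting $k\to\infty$ yields
\[
\sum_x\pi(x)\,w(x)\ge 0 .
\]
I would then upgrade this to rational coefficients: if $\lambda_1,\dots,\lambda_m\in\Q_{>0}$ and $v_1,\dots,v_m\in\R$ satisfy $\sum_i\lambda_i v_i\in\Q$, then, after clearing denominators and multiplying through by the denominator of the rational number $\sum_i\lambda_i v_i$, one obtains an admissible integral $w$ of the above type and deduces $\sum_i\lambda_i\pi(v_i)\ge 0$.

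With this in hand I would define, for $v\in\R$, the functional
\[
P(v)=\inf\Bigl\{\textstyle\sum_{i=1}^m\lambda_i\pi(v_i)\;:\;m\in\N,\ \lambda_i\in\Q_{>0},\ v_i\in\R,\ \textstyle\sum_{i}\lambda_i v_i-v\in\Q\Bigr\}.
\]
The trivial representation $v=1\cdot v$ shows $P(v)\le\pi(v)$, while concatenating and scaling representations shows that $P$ is subadditive and positively $\Q$-homogeneous. The empty representation gives $P(q)\le 0$ for $q\in\Q$, and combined with subadditivity and $P(0)=0$ this forces $P|_\Q=0$. The main point---and the step I expect to be the crux---is that $P$ is finite, i.e. $P(v)>-\infty$: this rests exactly on the rational nonnegativity above, which yields $P(0)\ge 0$, and then subadditivity $P(0)\le P(v)+P(-v)$ together with $P(-v)\le\pi(-v)$ gives $P(v)\ge-\pi(-v)>-\infty$. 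Finally I would invoke Hahn--Banach over $\Q$: the zero functional on the subspace $\Q\subset\R$ is dominated by $P$, so it extends to a $\Q$-linear $\phi\colon\R\to\R$ with $\phi\le P$. Then $\phi\le P\le\pi$, while $\phi(q)\le P(q)=0$ and $\phi(-q)\le 0$ force $\phi\equiv 0$ on $\Q$; setting $\theta=-\phi$ completes the construction. The only delicate issue is the finiteness and sublinearity of $P$; everything else is routine manipulation of representations.
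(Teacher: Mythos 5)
First, a caveat on the comparison you asked for: this paper does not actually prove Theorem~\ref{th:nonneg} --- it is imported without proof from~\cite{basu2017structure} --- so there is no internal proof to measure your attempt against, and I judge it on its own merits.

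Your argument is correct. The reduction is sound: since $\theta=-\phi$ is additive and vanishes on $\Q$, for every $y\in I_b$ one has $\sum_x\theta(x)y(x)=\theta\bigl(\sum_x xy(x)\bigr)=\theta(b+z)=0$ with $z\in\Z$, using $b\in\Q$ (this can be said directly, without routing through Proposition~\ref{prop:affine-hull}), so $\pi'=\pi-\phi$ is valid, and $\phi\le P\le\pi$ gives $\pi'\ge0$. The engine of the proof --- that $\sum_i\lambda_i\pi(v_i)\ge0$ whenever $\lambda_i\in\Q_{>0}$ and $\sum_i\lambda_iv_i\in\Q$ --- is derived correctly: clearing denominators (including the denominator of the rational sum) produces $w\colon\R\to\Z_+$ with $\sum_x w(x)x\in\Z$, and pumping $y_0+kw\in I_b$ with $k\to\infty$ against validity gives the claim. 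Two details should be made explicit. First, for finiteness of $P$ it is cleaner to bypass subadditivity, which you are applying before $P$ is known to be real-valued: instead, append the pair $(1,-v)$ to an arbitrary representation of $v$; the enlarged family has rational weighted sum, so the key inequality yields $\sum_i\lambda_i\pi(v_i)\ge-\pi(-v)$, hence $P(v)\ge-\pi(-v)$ directly. (Your phrasing is repairable to exactly this, and in fact the concatenation proof of subadditivity does survive extended values, but the direct route avoids the bookkeeping.) Second, you invoke Hahn--Banach for $\R$ viewed as a vector space over $\Q$, with $P$ only subadditive and $\Q_{>0}$-homogeneous; this is not the textbook statement, so you should note that the classical one-step-extension-plus-Zorn proof goes through verbatim in this setting: the separating value $c$ for extending from $W$ to $W+\Q x$ exists because $f(w)+f(w')=f(w+w')\le P(w+w')\le P(w-x)+P(w'+x)$, and only positive rational homogeneity is used when normalizing. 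The reliance on the axiom of choice is on par with the Hamel bases used throughout the paper. A pleasant feature of your construction worth stating is that $P$ depends only on the coset $v+\Q$, which is why $P|_\Q=P(0)=0$ (and hence $\phi|_\Q=0$) comes for free. Methodologically, your route --- a $\Q$-sublinear minorant plus an abstract extension theorem --- packages in one stroke what constructions in the style of~\cite{basu2017structure} achieve by defining the correcting additive function explicitly relative to a Hamel basis; both hinge on the same consequence of validity for rational conic combinations, and your version is self-contained and arguably cleaner, at the cost of proving the $\Q$-vector-space Hahn--Banach variant.
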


 A similar theorem holds without the assumption $b\in \Q$, see \cite{basu2017structure}.

\section{The construction}


Fix $b=1/2$.
Let $\mathcal B$ be a Hamel basis of $\R$ such that $b\in\mathcal B$.
Then every $x\in\R$ can be uniquely written in the form
\begin{equation*}\label{eq:x}
x=\lambda_b^x b+\sum_{a\in \mathcal A^x}\lambda_a^x a,
\end{equation*}
where $\mathcal A^x$ is a finite subset of $\mathcal B$ and the coefficients $\lambda_b^x,\lambda_a^x$, $a \in \mathcal{A}^x$ are all rational.

Define the following function $\pi:\R\to[0,1]$:
\[\pi(x):=
\begin{cases}
\lambda^x_b-\floor{\lambda^x_b} & \mbox{if $\lambda^x_b$ is not an odd integer,}\\
1 & \mbox{if $\lambda^x_b$ is an odd integer.}
\end{cases}\]
Note that $\pi(x)\equiv\lambda^x_b$ for every $x\in\R$.

\begin{prop}
Function $\pi$ is minimal for $I_b$.
\end{prop}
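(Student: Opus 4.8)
The plan is to verify the three conditions of Theorem~\ref{th:minimal}, and the key device is to separate the ``Hamel'' part from the ``arithmetic'' part. Observe that the map $\mu\colon\R\to\Q$ sending $x$ to its $b$-coordinate $\lambda^x_b$ in the Hamel decomposition is \emph{additive}: by uniqueness of the decomposition, $\lambda^{x+y}_b=\lambda^x_b+\lambda^y_b$. (In the language of \eqref{eq:theta}, $\mu$ is the additive function with $c(b)=1$ and $c(a)=0$ for all other $a\in\mathcal B$.) Moreover $\pi=f\circ\mu$, where $f\colon\R\to[0,1]$ is the single-variable function
\[
f(s)=\begin{cases} s-\floor{s} & \text{if $s$ is not an odd integer,}\\ 1 & \text{if $s$ is an odd integer.}\end{cases}
\]
Since $b\in\mathcal B$ we have $\mu(b)=1$; and since $b=1/2$, every rational $q$ decomposes as $q=(2q)\,b$, so $\mu(q)=2q$. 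In particular $\mu(z)=2z$ is an even integer for every $z\in\Z$. Because $f$ takes values in $[0,1]$, the candidate $\pi$ is a nonnegative function, so it remains only to check subadditivity, symmetry and vanishing on $\Z$, all of which reduce to statements about $f$ via the additive map $\mu$.

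The condition $\pi(z)=0$ for $z\in\Z$ is immediate: $\mu(z)=2z$ is an even integer, hence $f(2z)=2z-\floor{2z}=0$. For symmetry I would use $\mu(b-x)=\mu(b)-\mu(x)=1-\mu(x)$, so that, writing $s:=\mu(x)$, the claim becomes $f(s)+f(1-s)=1$ for all $s\in\Q$. This splits into three cases: if $s$ is an odd integer then $1-s$ is even and $f(s)+f(1-s)=1+0$; if $s$ is an even integer then symmetrically $f(s)+f(1-s)=0+1$; and if $s\notin\Z$ then $1-s\notin\Z$, the fractional part satisfies $(1-s)-\floor{1-s}=1-(s-\floor{s})$, and the two values again sum to $1$. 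It is precisely to make the first two cases balance that $f$ is set to $1$, rather than $0$, at odd integers.

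The main work is subadditivity, $f(s)+f(t)\ge f(s+t)$ with $s:=\mu(x)$ and $t:=\mu(y)$. Writing $g(s):=s-\floor{s}$ for the fractional part, I would first recall the standard inequality $g(s)+g(t)\ge g(s+t)$, and note that $f$ exceeds $g$ only at odd integers, where it is larger by exactly $1$. If $s+t$ is not an odd integer, then $f(s+t)=g(s+t)\le g(s)+g(t)\le f(s)+f(t)$ and we are done. The delicate case is when $s+t$ is an odd integer, so $f(s+t)=1$: here either both $s$ and $t$ are integers, forcing one to be odd and the other even so that $f(s)+f(t)=1+0$, or both are non-integers, in which case $g(s)+g(t)$ is an integer lying in $(0,2)$ and hence equals $1$, giving $f(s)+f(t)=1$. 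In every case $f(s)+f(t)\ge 1=f(s+t)$.

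I expect this final case analysis to be the only real obstacle: it is exactly where the jump to the value $1$ at odd integers must be reconciled with the subadditivity of the fractional part, and one has to check that the extra $+1$ never causes the inequality to fail in the wrong direction. Once the additivity of $\mu$ is in hand, everything else is mechanical, since all three minimality conditions become one-variable identities or inequalities for $f$.
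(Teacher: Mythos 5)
Your proposal is correct and takes essentially the same approach as the paper: both reduce the three conditions of Theorem~\ref{th:minimal} to one-variable statements about $\lambda^x_b$ via the additivity $\lambda^{x+y}_b=\lambda^x_b+\lambda^y_b$ (your $\mu$), with $\mu(z)=2z$ even for $z\in\Z$ handling the vanishing condition. The only difference is in bookkeeping: where you run an explicit case analysis on $f$ (parity at integers, subadditivity of the fractional part, the delicate case $s+t$ odd), the paper compresses the same cases into a congruence-mod-$1$ argument, namely $\pi(x)+\pi(y)\equiv\pi(x+y)$ combined with the bounds $\pi(x)+\pi(y)>0$ and $\pi(x+y)\le 1$.
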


\begin{proof}
We show that $\pi$ satisfies the conditions of Theorem \ref{th:minimal}. It is clear that $\pi$ is nonnegative. Also, if $x\in\Z$ then $\pi(x)=0$ because $\lambda^x_b$ is an even integer.\smallskip

We now verify that $\pi(x)+\pi(b-x)=1$ for every $x\in\R$.
Note that $\lambda^{b-x}_b=1-\lambda^x_b$.
If $\lambda^x_b\in\Z$, then $\lambda^{b-x}_b\in\Z$, and $\lambda_b,\lambda^{b-x}_b$ have opposite parity. It follows that in this case exactly one of $\pi(x),\pi(b-x)$ takes value 0 and the other takes value 1, thus $\pi(x)+\pi(b-x)=1$.
Therefore we now assume $\lambda^x_b\notin\Z$. Since $\pi(x)\equiv\lambda^x_b$ and $\pi(b-x)\equiv \lambda^{b-x}_b=1-\lambda^x_b$, we have $\pi(x)+\pi(b-x)\in\Z$. Since $0<\pi(x)<1$ and $0<\pi(b-x)<1$ (as $\pi$ is bounded between 0 and 1, and $\lambda^x_b\notin\Z$), we necessarily have $\pi(x)+\pi(b-x)=1$.\smallskip

We finally show that $\pi(x)+\pi(y)\ge\pi(x+y)$ for every $x,y\in\R$.
Note that $\lambda_b^{x+y}=\lambda_b^x+\lambda_b^y$.
If $\lambda_b^x$ and $\lambda_b^y$ are both even integers, then so is $\lambda_b^{x+y}$. Then in this case $\pi(x)=\pi(y)=\pi(x+y)=0$ and therefore the inequality $\pi(x)+\pi(y)\ge\pi(x+y)$ is satisfied.
Thus we now assume that $\lambda_b^x$ or $\lambda_b^y$ is not an even integer. Note that in this case $\pi(x)+\pi(y)>0$.
Since $\pi(x)\equiv\lambda_b^x$ and $\pi(y)\equiv\lambda_b^y$, we have $\pi(x)+\pi(y)\equiv\lambda_b^x+\lambda_b^y=\lambda_b^{x+y}\equiv\pi(x+y)$. Thus $\pi(x)+\pi(y)\equiv\pi(x+y)$. Since $\pi(x)+\pi(y)>0$ and $\pi(x+y)\le1$, we conclude that $\pi(x)+\pi(y)\ge\pi(x+y)$.
\end{proof}

\begin{prop}
Function $\pi$ is extreme for $I_b$.
\end{prop}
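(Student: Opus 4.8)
The plan is to take valid (hence nonnegative) functions $\pi_1,\pi_2$ with $\pi=\tfrac12\pi_1+\tfrac12\pi_2$ and prove $\pi_1=\pi_2=\pi$. Two general facts come first. Since $\pi$ is minimal, a domination argument shows $\pi_1,\pi_2$ are themselves minimal: if some minimal $\hat\pi_1\ne\pi_1$ dominated $\pi_1$, then $\tfrac12(\hat\pi_1+\pi_2)$ would be a valid function dominating $\pi$, contradicting its minimality. Second, every additivity relation of $\pi$ is inherited by $\pi_1$ and $\pi_2$: whenever $\pi(x)+\pi(y)=\pi(x+y)$, subadditivity of $\pi_1,\pi_2$ together with the averaging identity forces $\pi_i(x)+\pi_i(y)=\pi_i(x+y)$ for $i=1,2$.

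The key structural observation is that $\pi=F\circ\phi$, where $\phi(x):=\lambda_b^x$ is the $\Q$-linear (in particular additive) coordinate map $\R\to\Q$ with $\phi(b)=1$, and $F(t)=t-\floor{t}$ off the odd integers and $F=1$ on them. I would first note that $\pi$ vanishes whenever $\phi(x)\in 2\Z$; by nonnegativity of $\pi_1,\pi_2$ the same holds for them. Applying the inheritance of additivity to the pairs $(x,x'')$ with $x''\in\ker\phi$ (which are additive because $\pi(x'')=0$ and $\phi(x+x'')=\phi(x)$) gives $\pi_i(x+x'')=\pi_i(x)$, so each $\pi_i$ is constant on the fibres of $\phi$. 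Hence $\pi_i=h_i\circ\phi$ for functions $h_i\colon\Q\to\R_{\ge 0}$, and the problem reduces to showing $h_1=h_2=h$, where $h=F|_\Q$. From minimality of $\pi_i$ one reads off that each $h_i$ is subadditive, satisfies $h_i(s)+h_i(1-s)=1$, vanishes on $2\Z$, has period $2$, and $h_i(1)=\pi_i(b)=1$.

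To finish I would pin $h_1$ (the argument for $h_2$ being identical). On $[0,1]$ the pairs $(s,t)$ with $s,t\ge 0$ and $s+t\le 1$ are additive for $h$ (when $s+t=1$ the odd-integer convention gives $h(1)=1$), so the inherited Cauchy equation $h_1(s+t)=h_1(s)+h_1(t)$ holds on $[0,1]\cap\Q$; together with $h_1(1)=1$ this forces $h_1(t)=t$ there. The remaining, and genuinely delicate, range is the odd interval $(1,2)$: \emph{no} additive relation of $h$ connects it to $[0,1]$ using points of $[0,1]$ alone, so this is the main obstacle. The resolution is to use the mixed additive relations $h(s)+h(t)=h(s+t)$ valid for $s\in[0,1)$, $t\in(1,2)$ with $s+t\in(1,2)$; inheriting these and substituting $h_1(s)=s$ shows $h_1$ has slope $1$ on $(1,2)$, i.e.\ $h_1(t)=t+C$ there.

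The constant $C$ is then fixed by combining symmetry with periodicity. For $t\in(1,2)$ we have $3-t\in(1,2)$, and periodicity gives $h_1(1-t)=h_1(3-t)=(3-t)+C$, while symmetry gives $h_1(t)+h_1(1-t)=1$; substituting $h_1(t)=t+C$ yields $2C+3=1$, so $C=-1$ and $h_1(t)=t-1=F(t)$ on $(1,2)$. By period $2$ this extends to $h_1=h$ on all of $\Q$, whence $\pi_1=h_1\circ\phi=\pi$, and likewise $\pi_2=\pi$, proving extremality.
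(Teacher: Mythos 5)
Your proof is correct, and its first half coincides with the paper's own argument: your fibre-constancy step (using kernel elements $x''$ with $\lambda_b^{x''}=0$) is exactly the paper's Claim, proved there with the same witness $y=\lambda_b^x b-x$, and your restricted Cauchy-equation argument on $[0,1]\cap\Q$ is a repackaging of the paper's cases 1--2, which instead use the minimality inequality $q\pi_i(x)\ge\pi_i(qx)=\pi_i(b)=1$ plus averaging for $\lambda_b^x=1/q$, and a subadditivity upper bound for $\lambda_b^x=p/q$. Where you genuinely diverge is the interval $(1,2)$. The paper splits it at $3/2$: for $1<p/q\le 3/2$ it introduces the auxiliary point $y=3b-2x$, whose coefficient $(3q-2p)/q$ lands in $[0,1]$, and combines subadditivity, symmetry and periodicity to get the lower bound $\pi_i(x)\ge p/q-1$, which averaging turns into equality; the remaining range $3/2<p/q<2$ is handled by reflecting through $3b-x$. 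You instead use the inherited mixed relations $h_1(s+t)=h_1(s)+h_1(t)$ with $s\in(0,1)$ and $t,s+t\in(1,2)$ to conclude that $h_1(t)-t$ is a single constant $C$ on $(1,2)\cap\Q$, and then pin $C=-1$ by one application of symmetry plus period $2$ (the equation $2C+3=1$); I checked that these mixed relations do hold for $\pi$, so the inheritance is legitimate, and the degenerate point $t=3/2$ causes no trouble. This treats the whole odd interval uniformly, with no case split and with the averaging argument invoked only once (inside the inheritance lemma), whereas the paper's endgame is more elementary pointwise but relies on ad hoc auxiliary points. Both finishes rest on the same three tools---inherited additivity, symmetry, periodicity---so the overall skeleton is the paper's, but your slope-plus-constant argument is a genuinely different and arguably cleaner replacement for the paper's cases 3--4.
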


\begin{proof}
Assume that $\pi=\frac12\pi_1+\frac12\pi_2$. As $\pi$ is minimal, $\pi_1,\pi_2$ are also minimal functions for $I_b$. We show that $\pi=\pi_1=\pi_2$.

\begin{claim}
$\pi_i(x)=\pi_i(\lambda_b^xb)$ for every $x\in\R$ and $i=1,2$.
\end{claim}

\begin{cpf}
Take any $x\in\R$ and define $y:=\lambda_b^xb-x$. Note that $\lambda^y_b=0$, $x+y=\lambda_b^xb$, and $\lambda_b^{x+y}=\lambda_b^x$.
As $\lambda^y_b=0$, we have $\pi(y)=0$. Since $\pi(y)=\frac12\pi_1(y)+\frac12\pi_2(y)$ and $\pi_1,\pi_2$ are nonnegative functions, this implies that $\pi_1(y)=\pi_2(y)=0$.
Moreover, $\pi(x)=\pi(x+y)$, as $\lambda_b^x=\lambda_b^{x+y}$; so, $\pi(x) + \pi(y) = \pi(x+y)$. By subadditivity of $\pi_1,\pi_2$, this implies that $\pi_i(x) +\pi_i(y)=\pi_i(x+y)=\pi_i(\lambda_b^xb)$ for $i=1,2$. Since $\pi_i(y) = 0$, we obtain the desired result.\end{cpf}

We now analyze some cases to prove that $\pi(x)=\pi_1(x)=\pi_2(x)$ for every $x\in\R$. Because of the above claim and since we also have $\pi(x)=\pi(\lambda_b^xb)$ for every $x\in\R$, we can assume $x\in\Q$, i.e., $x=\lambda^x_bb$. Moreover, since $\pi,\pi_1,\pi_2$ are all periodic modulo $\Z$, we can assume $0\le x<1$, i.e., $0\le\lambda_b^x<2$.

\begin{enumerate}
\item
Assume first $\lambda^x_b=1/q$ for some positive integer $q$. By minimality of $\pi_i$, $q\pi_i(x)\ge\pi_i(qx)=\pi_i(q\lambda_b^{x}b)=\pi_i(b)=1$ for $i=1,2$.
Moreover, $q\pi(x)=q(1/q)=1$. Since $\pi(x)=\frac12\pi_1(x)+\frac12\pi_2(x)$, it follows that $q\pi_i(x)=1$ for $i=1,2$. This shows that $\pi(x)=\pi_1(x)=\pi_2(x)=1/q$.
\item
Assume now $\lambda^x_b=p/q$ for some integers $p$ and $q$ such that $q>0$ and $0\le p\le q$.
By subadditivity of $\pi_i$, $\pi_i(x)=\pi_i(\lambda^x_b b)\le p\pi_i(b/q)=p\pi(b/q)=p/q$ for $i=1,2$, where the second equation follows from case 1.
Moreover, $\pi(x)=p/q$ because $0\le\lambda^x_b\le1$.
This shows that $\pi(x)=\pi_1(x)=\pi_2(x)=p/q$.
\item
Assume now $\lambda^x_b=p/q$ for some positive integers $p$ and $q$ such that $q< p\le3q/2$.
Define $y:=3b-2x$. Since $\lambda_b^y=(3q-2p)/q$, $y$ satisfies the assumptions of case 2, thus $\pi(y)=\pi_1(y)=\pi_2(y)=(3q-2p)/q.$
By minimality of $\pi_i$, $\pi_i(x)+\pi_i(y)\ge\pi_i(x+y)=\pi_i(3b-x)=\pi_i(b-x)=1-\pi_i(x)$ for $i=1,2$, which implies $\pi_i(x)\ge p/q-1$.
Since $\pi(x)=p/q-1$, we obtain $\pi(x)=\pi_1(x)=\pi_2(x)=p/q-1$.
\item
Finally, assume $\lambda^x_b=p/q$ for some positive integers $p$ and $q$ such that $3q/2<p<2q$.
In this case $3b-x$ satisfies the assumption of case 3. By using this along with symmetry and periodicity of $\pi_i$ and $\pi$, we obtain $\pi_i(x)=1-\pi_i(3b-x)=1-\pi(3b-x)=\pi(x)$ for $i=1,2$.
\end{enumerate}
\end{proof}

\begin{prop}
The graph of $\pi$ is dense in $\R\times[0,1]$ and $\pi$ is discontinuous everywhere.
\end{prop}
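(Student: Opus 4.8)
The plan is to recognize that the coefficient map $\theta:\R\to\R$ given by $\theta(x):=\lambda_b^x$ is itself an additive function, and to feed it into Theorem~\ref{Th-Hamel}. First I would check additivity: by uniqueness of the Hamel decomposition one has $\lambda_b^{x+y}=\lambda_b^x+\lambda_b^y$ for all $x,y\in\R$, which is precisely the Cauchy equation~\eqref{eq:additive}. Next I would note that $\theta$ is \emph{not} linear: since $\mathcal B$ is infinite there is a basis element $a\in\mathcal B\setminus\{b\}$, and $\theta(a)=0$ while $a\ne0$, so $\theta$ cannot equal $x\mapsto cx$ for any constant $c$. Theorem~\ref{Th-Hamel} then gives that the graph $\{(x,\lambda_b^x):x\in\R\}$ is dense in $\R\times\R$.

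The second step is to transfer this density to $\pi$. The crucial observation is that $\pi$ coincides with $\theta$ wherever $\theta$ lands in the open interval $(0,1)$: if $\lambda_b^x\in(0,1)$ then $\lambda_b^x$ is not an integer (in particular not an odd integer) and $\floor{\lambda_b^x}=0$, whence $\pi(x)=\lambda_b^x$. Given any target $(x_0,t_0)\in\R\times[0,1]$ and any $\eps>0$, I would pick $s\in(0,1)$ with $|s-t_0|<\eps/2$ (taking $s$ slightly above $0$ when $t_0=0$ and slightly below $1$ when $t_0=1$), and set $\delta:=\min\{\eps/2,\dist(s,\{0,1\})\}>0$, so that $(s-\delta,s+\delta)\subseteq(0,1)$. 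By the density of the graph of $\theta$ there is an $x$ with $|x-x_0|<\eps$ and $|\lambda_b^x-s|<\delta$; then $\lambda_b^x\in(0,1)$, so $\pi(x)=\lambda_b^x$ and $|\pi(x)-t_0|\le|\lambda_b^x-s|+|s-t_0|<\eps$. Hence every point of $\R\times[0,1]$ is a limit of points of the graph of $\pi$, establishing density.

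Discontinuity then follows immediately. Because the graph of $\theta$ is dense in $\R\times\R$, it is dense in every open rectangle, in particular in rectangles $(x_0,x_0+\eps)\times(s-\delta,s+\delta)$ and $(x_0-\eps,x_0)\times(s-\delta,s+\delta)$ lying to either side of a fixed point $x_0$. Repeating the transfer above with $s\in(0,1)$ shows that on every one-sided neighborhood of $x_0$ the values of $\pi$ come within $\eps$ of any prescribed $t_0\in[0,1]$. Choosing $t_0$ with $|t_0-\pi(x_0)|=\tfrac12$ produces a sequence $x\to x_0$ along which $\pi(x)\not\to\pi(x_0)$, so $\pi$ is discontinuous at $x_0$; and since $t_0$ ranges over all of $[0,1]$, neither the left nor the right limit of $\pi$ exists at any point.

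I expect the only delicate point to be the bookkeeping around the boundary values $t_0\in\{0,1\}$ and the odd-integer exception in the definition of $\pi$; confining the approximation to the open interval $(0,1)$, where $\pi$ agrees exactly with $\theta$, sidesteps both. Everything else is a direct application of the density of non-linear additive functions already recorded in Theorem~\ref{Th-Hamel}.
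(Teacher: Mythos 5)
Your proof is correct and takes essentially the same route as the paper: both identify $x\mapsto\lambda_b^x$ as a nonlinear additive function, invoke Theorem~\ref{Th-Hamel} to get density of its graph in $\R\times\R$, and transfer this density to $\pi$ by observing that $\pi$ agrees with this function wherever its value lies in $[0,1]$. Your additional bookkeeping at the boundary values $t_0\in\{0,1\}$ and the explicit one-sided argument for nonexistence of limits merely spell out details the paper leaves implicit.
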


\begin{proof}
Let $f:\R\to\R$ be the unique additive function satisfying $f(b)=1$ and $f(a)=0$ for every $a\in\mathcal B$.
In other words, $f(x)=\lambda_b^x$ for every $x\in\R$.
Since $f$ is an additive function that is not linear, by Theorem \ref{Th-Hamel}, the graph of $f$  is dense in $\R\times\R$.

Since $f(x)=\lambda_b^x$ for every $x\in\R$, we have
\[\pi(x)=
\begin{cases}
f(x)-\floor{f(x)} & \mbox{if $f(x)$ is not an odd integer,}\\
1 & \mbox{if $f(x)$ is an odd integer.}
\end{cases}\]
Then the graph of $\pi$ contains all the points of the graph of $f$ that lie in $\R\times[0,1]$, and therefore it is dense in $\R\times[0,1]$. This also implies that $\pi$ is discontinuous everywhere.
\end{proof}

\providecommand\CheckAccent[1]{\accent20 #1}


\begin{thebibliography}{10}

\bibitem{aczel1989functional}
J{\'a}nos Acz{\'e}l and Jean~G Dhombres.
\newblock {\em Functional Equations in Several Variables}.
\newblock Number~31 in Encyclopedia of Mathematics and its Applications.
  Cambridge university press, 1989.

\bibitem{bccz08222222}
Amitabh Basu, Michele Conforti, G{\'e}rard Cornu{\'e}jols, and Giacomo
  Zambelli.
\newblock A counterexample to a conjecture of {G}omory and {J}ohnson.
\newblock {\em Mathematical Programming Ser. A}, 133:25--38, 2012.

\bibitem{basu2015geometric}
Amitabh Basu, Michele Conforti, and Marco Di~Summa.
\newblock A geometric approach to cut-generating functions.
\newblock {\em Mathematical Programming}, 151(1):153--189, 2015.

\bibitem{basu2016extreme}
Amitabh Basu, Michele Conforti, Marco Di~Summa, and Joseph Paat.
\newblock Extreme functions with an arbitrary number of slopes.
\newblock In {\em International Conference on Integer Programming and
  Combinatorial Optimization}, pages 190--201. Springer, 2016.

\bibitem{basu2017structure}
Amitabh Basu, Michele Conforti, Marco Di~Summa, and Joseph Paat.
\newblock The structure of the infinite models in integer programming.
\newblock In {\em International Conference on Integer Programming and
  Combinatorial Optimization}, pages 63--74. Springer, 2017.

\bibitem{basu2016light}
Amitabh Basu, Robert Hildebrand, and Matthias K{\"o}ppe.
\newblock Light on the infinite group relaxation {I}: Foundations and taxonomy.
\newblock {\em 4OR}, 14(1):1--40, 2016.

\bibitem{basu2016light2}
Amitabh Basu, Robert Hildebrand, and Matthias K{\"o}ppe.
\newblock Light on the infinite group relaxation {II}: Sufficient conditions
  for extremality, sequences, and algorithms.
\newblock {\em 4OR}, 14(2):1--25, 2016.

\bibitem{basu-hildebrand-koeppe-molinaro:k+1-slope}
Amitabh Basu, Robert Hildebrand, Matthias K\"oppe, and Marco Molinaro.
\newblock A~$(k+1)$-slope theorem for the $k$-dimensional infinite group
  relaxation.
\newblock {\em SIAM Journal on Optimization}, 23(2):1021--1040, 2013.

\bibitem{infinite}
Ralph~E. Gomory and Ellis~L. Johnson.
\newblock Some continuous functions related to corner polyhedra, {I}.
\newblock {\em Mathematical Programming}, 3:23--85, 1972.

\bibitem{infinite2}
Ralph~E. Gomory and Ellis~L. Johnson.
\newblock Some continuous functions related to corner polyhedra, {II}.
\newblock {\em Mathematical Programming}, 3:359--389, 1972.

\bibitem{tspace}
Ralph~E. Gomory and Ellis~L. Johnson.
\newblock T-space and cutting planes.
\newblock {\em Mathematical Programming}, 96:341--375, 2003.

\bibitem{koeppe-zhou:extreme-search}
Matthias K{\"o}ppe and Yuan Zhou.
\newblock New computer-based search strategies for extreme functions of the
  {Gomory--Johnson} infinite group problem.
\newblock eprint arXiv:{\penalty0}1506.00017 [math.OC], 2015.

\bibitem{koppe2016equivariant}
Matthias K{\"o}ppe and Yuan Zhou.
\newblock Equivariant perturbation in {G}omory and {J}ohnson's infinite group
  problem. {VI}. {T}he curious case of two-sided discontinuous functions.
\newblock {\em arXiv preprint arXiv:1605.03975}, 2016.

\bibitem{koeppe-zhou-ipco2017}
Matthias K\"oppe and Yuan Zhou.
\newblock On the notions of facets, weak facets, and extreme functions of the
  {G}omory–{J}ohnson infinite group problem.
\newblock In {\em International Conference on Integer Programming and
  Combinatorial Optimization}, pages 330--342. Springer, 2017.

\end{thebibliography}
\end{document}